\theoremstyle{thmstyleone}
\newtheorem{theorem}{Theorem}[section]
\theoremstyle{thmstyletwo}
\newtheorem{example}[theorem]{Example}
\newtheorem{remark}[theorem]{Remark}
\newtheorem{lemma}[theorem]{Lemma}
\newtheorem{fact}[theorem]{Fact}
\theoremstyle{definition}
\theoremstyle{remark}
\numberwithin{equation}{section}
\theoremstyle{thmstylethree}%
\begin{document}
	
\title[Eigenvalue value bounds]
{Spectral bounds for certain special type of rational matrices}

\author[1]{\fnm{Pallavi} \sur{Basavaraju}}\email{pallavipoorna6@gmail.com; pallavipoorna20@iisertvm.ac.in}
\equalcont{These authors contributed equally to this work.}

\author[2]{\fnm{Shrinath} \sur{Hadimani}}\email{shrinath.hadimani@manipal.edu; srinathsh3320@iisertvm.ac.in}
\equalcont{These authors contributed equally to this work.}
	
\author*[3]{\fnm{Sachindranath} \sur{Jayaraman}}\email{sachindranathj@iisertvm.ac.in; sachindranathj@gmail.com}
\equalcont{These authors contributed equally to this work.}
	
\affil[1]{\orgdiv{Department of Mathematics}, \orgname{Dr. G. Shankar Government Women's First Grade College 
and P.G. Study Centre}, \orgaddress{\street{Ajjarakadu}, \city{Udupi}, \postcode{576101}, \state{Karnataka}, \country{India}}}

\affil[2]{\orgdiv{Department of Mathematics}, \orgname{Manipal Institute of Technology, Manipal Academy of Higher 
Education}, \orgaddress{\city{Manipal}, \postcode{576104}, \state{Karnataka}, \country{India}}}

\affil[3]{\orgdiv{School of Mathematics}, \orgname{Indian Institute of Science Education and Research Thiruvananthapuram}, 
\orgaddress{\street{Vithura}, \city{Thiruvananthapuram}, \postcode{695551}, \state{Kerala}, \country{India}}}

\abstract{The aim of this manuscript is to derive bounds on the moduli of eigenvalues
of special type of rational matrices of the form $T(\lambda) = \displaystyle -B_0 +I\lambda +\frac{B_1}{\lambda-\alpha_1}+ \dots+ \frac{B_m}{\lambda-\alpha_m}$, 
where $B_i$'s are $n \times n$ complex matrices and $\alpha_i$'s are distinct complex 
numbers, using the following methods: $(1)$ an upper bound is obtained using the Bauer-Fike 
theorem for complex matrices on an associated block matrix $C_T$ of the given rational matrix 
$T(\lambda)$, $(2)$ a lower bound is obtained in terms of a zero of a scalar real rational 
function $p(x)$ associated with $T(\lambda)$, using Rouch$\text{\'e}$'s theorem for 
matrix-valued functions and $(3)$ an upper bound is also obtained using a numerical 
radius inequality for a block matrix $C_q$ associated with another scalar real rational
function $q(x)$ corresponding to $T(\lambda)$. These bounds are compared when the 
coefficients are unitary matrices. Numerical examples are given to illustrate the results 
obtained.}

\keywords{Rational matrices, rational eigenvalue problems, spectral bounds for rational 
matrices, matrix-valued functions; matrix polynomials, numerical radius.}
	
\pacs[MSC Classification]{15A18, 15A42, 15A22, 93B18, 47A56.}
	
\maketitle
	
\section{Introduction}\label{sec-1}

An $n \times n$ rational matrix denoted by $T(\lambda)$, is one whose entries are complex 
rational functions. The rational eigenvalue problem is to determine a complex number 
$\lambda_0$ and an $n \times 1$ nonzero vector $v$ such that $T(\lambda_0)v=0$. The 
scalar $\lambda_0$ is called an eigenvalue of $T(\lambda)$. Rational eigenvalue problems, 
abbreviated henceforth as REPs, are an important class of 
nonlinear eigenvalue problems that arise in applications to science and engineering. REPs 
arise for instance, in applications to computing damped vibration modes of an acoustic 
fluid confined in a cavity \cite{Chou-Huang}, describing the eigenvibration of a string 
with a load of mass attached by an elastic string \cite{Betcke-Higham-Tisseur} and in 
application to photonic crystals \cite{Engstrom-Langer-Tretter}, to name a few. Readers 
may refer to \cite{Pallavi-Shrinath-Sachindranath}, \cite{Dopico-Marcaida}, 
\cite{Mehrmann-Heinrich}, \cite{Saad-El-Guide-Miedlar} and \cite{Su-Bai} and the references 
cited therein for some recent work on REPs. Determining the exact eigenvalues of rational 
matrices presents a significant challenge, and hence they are approximated using iterative 
methods \cite{Mehrmann-Heinrich}. Consequently, establishing bounds on eigenvalues is 
crucial for making an initial guess, which influences the convergence rate of the iteration. 
A rational matrix, where each entry is a scalar polynomial, is referred to as a matrix polynomial.
There are several spectral bounds for matrix polynomials depending on norms of 
coefficients and roots of associated scalar polynomials (see for instance, \cite{Cameron}, 
\cite{Higham-Tisseur}, \cite{Monga-Shah} and \cite{Shah-Singh}). For rational 
matrices, one approach to estimate/compute spectral bounds is to convert the rational matrix to a 
matrix polynomial and use existing results. In practice, it is difficult to determine 
the coefficients of the matrix polynomial that comes out from this technique. Among 
the ones available in the literature, there is no easy or the  best way to determine 
the location of eigenvalues of rational matrices using matrix polynomials. The purpose 
of this work is to derive bounds on the moduli of eigenvalues of certain 
special type of rational matrices. We provide bounds that can be calculated with a 
small computational effort.

\medskip
We work either over the field $\mathbb{C}$ of complex numbers or over the field 
$\mathbb{R}$ of real numbers. The vector space of $n \times n$ matrices over 
$\mathbb{C}$ (respectively, $\mathbb{R}$) is denoted by $M_n(\mathbb{C})$ 
(respectively, $M_n(\mathbb{R})$). $||\cdot ||_2$ denotes the spectral norm of a square 
matrix. The condition number of a square matrix $A$ is defined as \\
$\kappa(A) = \begin{cases}
||A||||A^{-1}|| & \text{if $A$ is invertible}\\
\infty & \text{otherwise}
\end{cases}$, where $||\cdot||$ is any matrix norm.

\medskip
The organization of the manuscript is as follows. Section \ref{sec-2.1} contains a brief 
introduction to rational matrices. This is followed by deriving a bound on the moduli 
of eigenvalues of rational matrices of special type using an associated block matrix 
(see Section \ref{sec-2.1} for the definition and  Section \ref{sec-2.2} for details). We 
then derive a lower bound on the moduli of eigenvalues using roots of a real rational 
function (see Section \ref{sec-2.3} for details). Section \ref{sec-2.4} concerns deriving 
a bound using scalar polynomials and the same is done using numerical radius in 
Section \ref{sec-2.5}. These bounds are compared in Section 
\ref{sec-3}. Numerical illustrations are given in Section \ref{sec-4}. The computations are 
done using Matlab.

\medskip
\section{Main results}\label{sec-2}

The main results are presented in this section. We start with preliminaries on rational 
matrices. In the subsections that follow, we derive various bounds on the moduli of 
eigenvalues of rational matrices.

\subsection{Rational matrices}\label{sec-2.1}\hspace*{\fill} 

Any $n \times n$ rational matrix can be expressed in the form 
$\displaystyle T(\lambda) = P(\lambda)-\sum_{i=1}^{k}\frac{s_i(\lambda)}{q_i(\lambda)}E_i$, 
where $P(\lambda)=\displaystyle \sum_{j=0}^{d}A_j\lambda^j$ is an $n \times n$ matrix polynomial 
of degree $d$, $s_i(\lambda)$ and $q_i(\lambda)$ are scalar polynomials of degree $n_i$ and 
$d_i$ respectively, and $A_j$'s, $E_i$'s $\in M_n(\mathbb{C})$. Rational matrices are often 
known as rational matrix functions or matrix rational functions in the literature. An 
$n \times n$ rational matrix $T(\lambda)$ is said to be regular if its determinant does not 
vanish identically. The rational eigenvalue problem (REP) is to find a scalar 
$\lambda_0 \in \mathbb{C}$ and a nonzero vector $v \in \mathbb{C}^n$ such that 
$T(\lambda_0)v =0$, with $T(\lambda)$ being regular and $T(\lambda_0)$ bounded 
(that is, $T(\lambda_0)$ has finite entries). The scalar $\lambda_0$ so obtained is called 
an eigenvalue of $T(\lambda)$ and the vector $v$ is called an eigenvector of $T(\lambda)$ 
corresponding to the eigenvalue $\lambda_0$. Note that if $T(\lambda) = B$, where $B$ is a 
nonsingular matrix, then no complex number is an eigenvalue for $T(\lambda)$. 

\medskip

The nonlinear eigenvalue problem (abbreviated as NEP) seeks to find a scalar 
$\lambda_0$ and a nonzero vector $v \in \mathbb{C}^n$ satisfying $G(\lambda_0)v=0$, where 
$G(\lambda)$ is a regular matrix-valued function ($G(\lambda)$ is square and 
its determinant does not vanish identically), and each entry of $G(\lambda_0)$ is bounded. 
Results on the location of eigenvalues of nonlinear eigenvalue problems 
via the Ger\v{s}gorin-type theorem and the quadratic numerical range techniques can be 
found in \cite{Bindel-Hood} and \cite{Tretter}, respectively. A comprehensive treatment of 
NEPs can be found in \cite{Betcke-Higham-Tisseur}, \cite{Dopico-Marcaida}, \cite{Mehrmann-Heinrich} 
and the references cited therein.
The majority of the nonlinear eigenvalue problems in applications are of the form, 
$G(\lambda)=-B_0+A_0\lambda+A_1f_1(\lambda)+\dots+A_p f_p(\lambda)$,
where $f_i: \Omega \rightarrow \mathbb{C}$ are analytic functions and $\Omega$
is a region in $\mathbb{C}$. In \cite{Saad-El-Guide-Miedlar}, to study the eigenvalue 
problem of $G(\lambda)$ the authors consider the $surrogate$ problem, 
$T(\lambda)v = \Big(-B_0+ A_0 \lambda + \displaystyle \sum_{i=1}^{m} 
\frac{B_i}{\lambda-\alpha_i}\Big)v=0$, where the $\alpha_i$'s are 
distinct complex numbers. This is achieved by approximating each $f_i$ by a rational 
function of the form $r_i(\lambda)= \displaystyle 
\sum_{i=1}^{m} \frac{\sigma_{ij}}{\lambda-\alpha_i}$. The $\alpha_i$'s are the same 
for each $r_i$. This motivates us to study the rational matrices of the form 
\begin{equation}\label{Eqn-2.1}
\displaystyle T(\lambda) = -B_0 +A_0\lambda +\frac{B_1}{\lambda-\alpha_1}+ \dots+ 
\frac{B_m}{\lambda-\alpha_m}, 
\end{equation}
where the $B_i$'s are $n \times n$ matrices and the $\alpha_i$'s are 
distinct complex numbers. Since we are interested in finding bounds on the eigenvalues 
of $T(\lambda)$ we assume $A_0$ to be nonsingular. We assume $A_0=I$, the identity 
matrix as one can multiply Equation \eqref{Eqn-2.1} by $A_0^{-1}$. We, thus, consider 
rational matrices of the form
\begin{equation}\label{eqn-Standard form}
T(\lambda) = \displaystyle -B_0 + I\lambda + \frac{B_1}{\lambda-\alpha_1} +\frac{B_2}
{\lambda-\alpha_2}+\cdots +\frac{B_m}{\lambda-\alpha_m},
\end{equation}
where $\alpha_i$'s are distinct complex numbers ordered $|\alpha_1| < |\alpha_2|<\cdots <|\alpha_m|$ 
and the $B_i$'s are $n\times n$ complex matrices. It turns out that many rational 
eigenvalue problems can be converted to this form (see for instance, Example \ref{Example-3}). 
Note that for $T(\lambda)$ given in Equation \eqref{eqn-Standard form}, the REP can be converted 
to a linear eigenvalue problem, $P(\lambda)v =0$, where $P(\lambda) = I\lambda -C_T$ with
\begin{center}
$C_T = 
\begin{bmatrix}
\alpha_1 I & 0 & \cdots & 0 & -I \\
0 & \alpha_2 I  & \cdots & 0 &-I \\
\vdots & \vdots   & \ddots & \vdots & \vdots \\
0 & 0 & \cdots & \alpha_m I & -I \\
B_1 & B_2 & \cdots & B_m & B_0 & 
\end{bmatrix}$
\end{center}
of size $(m+1)n \times (m+1)n$.  $P(\lambda)$ is also a polynomial
system matrix of $T(\lambda)$ with the state matrix 
$A(\lambda) = \text{diag} \big((\lambda-\alpha_1)I, \ldots, (\lambda-\alpha_m)I\big)$ 
(see \cite{Amparan-Dopico}, \cite{Dopico-Marcaida} for details). Interestingly, 
the linearization of $T(\lambda)$ given in \cite{Alam-Behera} using Fiedler matrices is 
the same as $P(\lambda)$. Thus, corresponding to  $T(\lambda)$ we associate the block matrix 
$C_T$. Note that the eigenvalues of $T(\lambda)$ are also eigenvalues of $C_T$. Moreover, 
if all the $B_i$'s are nonsingular, then $T(\lambda)$ and $C_T$ have the same eigenvalues 
(see \cite{Dopico-Marcaida} for details). It is easy to verify that if one of the 
coefficients $B_i$ is singular, then the corresponding pole $\alpha_i$ is an eigenvalue 
of $C_T$. Therefore, $T(\lambda)$ has at most $(m+1)n$ eigenvalues.

\medskip
In \cite{Chu}, the author proves certain perturbation results for eigenvalues of 
matrix polynomials that are analogous to the Bauer-Fike theorem for complex matrices. 
The author considers a matrix polynomial $P(\lambda)$ and perturbs the coefficient 
matrices to get another matrix polynomial $\widetilde{P}(\lambda)$. Further, the author
defines the spectral variation between the eigenvalues of $P(\lambda)$ and 
$\widetilde{P}(\lambda)$ and derive Bauer-Fike type results on the spectral variation 
using a Jordan triplet of $P(\lambda)$. Similar results were studied in \cite{Chu-Lin} 
for periodic pairs of matrices. Eigenvalue perturbation theory for homogeneous matrix 
polynomials can also be found in \cite{Dedieu-Tisseur}. Note that one can obtain a rational
matrix $\widetilde{T}(\lambda)$ by perturbing the coefficient matrices of the given rational
matrix $T(\lambda)$. However, the block companion matrix corresponding to 
$\widetilde{T}(\lambda)$ and a matrix obtained by perturbing the entries of the block
companion matrix $C_T$ of $T(\lambda)$ are not the same in general.
We do not study perturbation results for rational matrices in this manuscript. 
Instead, we give a region that contains the eigenvalues of a rational matrix 
$T(\lambda)$.

\subsection{Bounds on the eigenvalues of $T(\lambda)$ using Bauer-Fike theorem}\label{sec-2.2}
\hspace*{\fill}

One of the well known results in the perturbation theory of the eigenvalue of a 
diagonalizable matrix is due to Bauer and Fike (Theorem 6.3.2, \cite{Horn-Johnson}). We employ
this result to find a bound for the moduli of eigenvalues of a rational matrix, given in Equation 
\eqref{eqn-Standard form}. We state the Bauer-Fike theorem below.

\medskip
\begin{theorem}[Theorem $6.3.2$, \cite{Horn-Johnson}]\label{Thm-Bauer-Fike}
Let $A \in M_n(\mathbb{{C}})$ be diagonalizable, and suppose that $A=S\Lambda S^{-1}$, in 
which $S$ is nonsingular and $\Lambda$ is diagonal. Let $E \in M_n(\mathbb{C})$ and 
$||\cdot||$ be a matrix norm on $M_n(\mathbb{C})$ that is induced by an absolute norm on 
$\mathbb{C}^n$. If $\hat{\lambda}$ is an eigenvalue of $A+E$, then there is an eigenvalue 
$\lambda$ of $A$ such that $|\hat{\lambda} - \lambda| \leq  \kappa(S)||E||$ in which 
$\kappa(\cdot)$ is the condition number with respect to the matrix norm.
\end{theorem}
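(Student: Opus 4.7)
The plan is to reduce the claim to estimating $||(\hat{\lambda}I - \Lambda)^{-1}||$, and then to use the hypothesis that the norm is induced by an absolute norm to evaluate this quantity in closed form. I would begin with the easy case: if $\hat{\lambda}$ is itself an eigenvalue of $A$, the conclusion holds with $\lambda = \hat{\lambda}$, so for the rest of the argument I may assume $\hat{\lambda}$ is not an eigenvalue of $A$. In particular, $\Lambda - \hat{\lambda}I$ and hence $A - \hat{\lambda}I = S(\Lambda - \hat{\lambda}I)S^{-1}$ are invertible.

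The second step is a standard resolvent manipulation. Choose a nonzero eigenvector $x$ of $A+E$ for the eigenvalue $\hat{\lambda}$; then $(A+E)x = \hat{\lambda}x$ rearranges as $Ex = (\hat{\lambda}I - A)x$, and inverting gives
$$x = (\hat{\lambda}I - A)^{-1} E x = S(\hat{\lambda}I - \Lambda)^{-1} S^{-1} E x.$$
Taking the norm of both sides, applying submultiplicativity, and dividing by $||x|| > 0$ yields
$$1 \leq \kappa(S)\, ||(\hat{\lambda}I - \Lambda)^{-1}||\, ||E||.$$

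The crucial third step is to evaluate $||(\hat{\lambda}I - \Lambda)^{-1}||$ explicitly. Here one invokes the classical fact that a matrix norm induced by an absolute norm on $\mathbb{C}^n$ sends any diagonal matrix $\text{diag}(d_1,\ldots,d_n)$ to $\max_i |d_i|$. Applied to $(\hat{\lambda}I - \Lambda)^{-1}$, whose diagonal entries are $1/(\hat{\lambda} - \lambda_i)$, this gives $||(\hat{\lambda}I - \Lambda)^{-1}|| = 1/\min_i |\hat{\lambda} - \lambda_i|$. Substituting and rearranging produces $\min_i |\hat{\lambda} - \lambda_i| \leq \kappa(S)\, ||E||$, which is the desired bound.

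The main obstacle lies chiefly in justifying this last evaluation of the norm of a diagonal matrix: one has to verify, from the defining property of an absolute norm (equivalently, monotonicity under componentwise absolute values), that the corresponding induced operator norm restricts on diagonal matrices to the maximum modulus of the diagonal entries. This is a classical ingredient available in Horn and Johnson, and once granted the rest of the proof is a short computation using the diagonalization $A = S\Lambda S^{-1}$ and submultiplicativity of the norm.
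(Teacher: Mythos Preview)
Your argument is the standard and correct proof of the Bauer--Fike theorem: reduce to the case where $\hat{\lambda}$ is not an eigenvalue of $A$, invert $\hat{\lambda}I - A$ via the diagonalization, take norms, and use the fact that an induced norm coming from an absolute (equivalently, monotone) vector norm evaluates on a diagonal matrix as the maximum modulus of its diagonal entries. Note, however, that the paper does not supply its own proof of this statement; it merely quotes it as Theorem~6.3.2 of Horn and Johnson and uses it as a black box, so there is no in-paper argument to compare against --- your write-up is precisely the classical proof found in that reference.
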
 

\medskip
The block matrix $C_T$ associated with the rational matrix given in 
Equation \eqref{eqn-Standard form} can be expressed as $C_T = A+E$, where 
$A = \text{diag}(\alpha_1 I, \ldots, \alpha_m I, 0)$ and $E=C_T - A$. Since $A$ is 
diagonalizable, the following result is an easy consequence of the Bauer-Fike theorem. 

\medskip
\begin{theorem}\label{Thm-eigenvalue-Bauer-Fike}
Let $T(\lambda)$ be as in Equation \eqref{eqn-Standard form}, with an eigenvalue $\lambda_0$. 
Then $|\lambda_0| \leq ||E|| + |\alpha_m|$, where 
\begin{center}
$E = \begin{bmatrix}
0 & \cdots & 0 & -I\\
\vdots & \ddots & \vdots & \vdots\\
0 & \cdots & 0 & -I\\
B_1 & \cdots & B_m & B_0\\
\end{bmatrix}$
\end{center}
and $||\cdot||$ is any matrix norm induced by an absolute norm on 
$\mathbb{C}^n$.
\end{theorem}

\medskip
Alternatively, the above result can be obtained by subadditivity of the matrix norm, 
$||C_T|| = ||A+E| |\leq ||A||+||E|| = |\alpha_m| + ||E||$, and the fact that the spectral radius 
of $C_T$ is at most $||C_T||$. In particular, if we assume that the $B_i$'s are unitary 
matrices and the induced norm is the spectral norm, then we get a bound which depends only 
on the number of poles of $T(\lambda)$ and their moduli. We prove this below. The proof is by 
induction on the size of the scalar matrix whose entries are the spectral norms of the block matrices 
from $E$. We shall use the following fact in the proof.

\medskip
\begin{fact}\label{fact-1}
Let $\mathcal{A} = (A_{ij})$ be a block matrix where the entries are complex square matrices with 
$1 \leq i,j \leq m$. Let $\mathcal{\widetilde{A}} = (||A_{ij}||_2)$ (the matrix whose entries are the 
spectral norms of the matrices $A_{ij}$). Then, $||\mathcal{A}||_2 \leq ||\mathcal{\widetilde{A}}||_2$. 
Notice that if $x = (x_1, \dots, x_m) \in \mathbb{C}^n$ 
(partitioned conformally with respect to the size of the matrices) for some $n$, then 
$x$ and $\widetilde{x} = (||x_1||_2, \dots, ||x_m||_2)$ have the same norm. It then easily follows 
that $||\mathcal{A} x||_2 \leq ||\mathcal{\widetilde{A}}  \widetilde{x}||_2$.
\end{fact}

\begin{theorem}\label{Thm-eigenvalue-unitary-Bauer-Fike}
Let $\displaystyle T(\lambda) = -B_0 +I\lambda +\frac{B_1}{\lambda-\alpha_1} + \cdots 
+\frac{B_m}{\lambda-\alpha_m}$, where each $B_i$ is an $n \times n$ unitary matrix 
and the $\alpha_i$'s are distinct complex numbers.  Then $|\lambda_0| \leq 
\displaystyle\Bigg\{\frac{(2m+1)+(4m+1)^{1/2}}{2}\Bigg\}^{1/2} + |\alpha_m|$ for any 
eigenvalue $\lambda_0$ of $T(\lambda)$.
\end{theorem}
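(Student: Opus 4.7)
The plan is to combine Theorem \ref{Thm-eigenvalue-Bauer-Fike} with an explicit computation of the spectral norm of the error matrix $E$ under the unitarity hypothesis. Since the spectral norm is induced by the Euclidean norm on $\mathbb{C}^n$ (an absolute norm), Theorem \ref{Thm-eigenvalue-Bauer-Fike} already gives $|\lambda_0| \leq \|E\|_2 + \max\{|\alpha_i| : 1 \leq i \leq m\}$. Thus the entire task reduces to showing that
\[
\|E\|_2 \;=\; \left\{\frac{(2m+1) + (4m+1)^{1/2}}{2}\right\}^{1/2}.
\]

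To compute $\|E\|_2$, I would work with $EE^\ast$ rather than $E^\ast E$, because the former simplifies dramatically: each block product of the form $B_i B_i^\ast$ collapses to $I$ by unitarity. A direct block multiplication shows that $EE^\ast$ is the $(m+1)\times(m+1)$ block matrix whose $(i,j)$-block is $I$ for $1\leq i,j\leq m$, whose last block column is $(-B_0^\ast,\ldots,-B_0^\ast, (m+1)I)^T$, and whose last block row is $(-B_0,\ldots,-B_0,(m+1)I)$. Crucially, the coefficients $B_1,\ldots,B_m$ no longer appear in $EE^\ast$, which is what lets the final bound depend only on $m$.

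Next I would find $\lambda_{\max}(EE^\ast)$ by exploiting the repetition of the first $m$ block rows. Writing an eigenvector in block form as $(x_1,\ldots,x_m,y)^T$, the eigenvalue equation $(EE^\ast)x=\mu x$ forces $\mu x_i$ to be independent of $i$, so for any $\mu\neq 0$ one has $x_1=\cdots=x_m=:v$. Substituting this reduces the problem to the $2\times 2$ block system
\[
(m-\mu)\,v \;=\; B_0^\ast y, \qquad (m+1-\mu)\,y \;=\; mB_0 v.
\]
Eliminating $y$ via $y=(m-\mu)B_0 v$ (using $B_0B_0^\ast=I$) gives the scalar equation $(m+1-\mu)(m-\mu) = m$, i.e. $\mu^2 - (2m+1)\mu + m^2 = 0$, whose roots are $\mu = \frac{(2m+1)\pm(4m+1)^{1/2}}{2}$. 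A quick dimension count (the kernel of $EE^\ast$ accounts for the remaining $(m-1)n$ dimensions coming from vectors with $\sum_i x_i=0$ and $y=0$) confirms that these are all the nonzero eigenvalues. Taking the larger root and the square root yields the claimed value of $\|E\|_2$, and substitution into Theorem \ref{Thm-eigenvalue-Bauer-Fike} finishes the proof.

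The only real obstacle is the eigenvalue computation for $EE^\ast$; the cleanest way through it is the ansatz $x_1=\cdots=x_m$, which is natural once one observes that the first $m$ block rows of $EE^\ast$ are identical. Everything else is bookkeeping.
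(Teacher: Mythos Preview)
Your argument is correct and, in my view, cleaner than the paper's. Both proofs begin identically: invoke Theorem \ref{Thm-eigenvalue-Bauer-Fike} with the spectral norm, compute $EE^\ast$ block by block, and observe that the unitarity of the $B_i$'s collapses the bottom-right block to $(m+1)I$ while the remaining structure depends only on $B_0$. The divergence is in how the spectrum of $EE^\ast$ is extracted. The paper performs an explicit block Gaussian elimination, constructing a lower-triangular matrix $Q(\lambda)$ with $\det Q(\lambda)=1$ such that $Q(\lambda)(EE^\ast-\lambda I)$ is block upper triangular, and then multiplies out the telescoping product of diagonal determinants to reach the factorisation $\det(EE^\ast-\lambda I)=(-1)^{(m-1)n}\lambda^{(m-1)n}\big(\lambda^2-(2m+1)\lambda+m^2\big)^n$. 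Your route exploits the symmetry directly: the first $m$ block rows of $EE^\ast$ are identical, so any eigenvector with nonzero eigenvalue must have equal first $m$ block components, reducing the problem to a $2\times 2$ block system that immediately yields the quadratic $\mu^2-(2m+1)\mu+m^2=0$; the kernel description $\{\sum_i x_i=0,\ y=0\}$ accounts for the remaining $(m-1)n$ dimensions. Your approach avoids the somewhat delicate determinant bookkeeping and makes the multiplicities transparent; the paper's approach, while heavier, produces the full characteristic polynomial in closed form without any ansatz.
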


\begin{proof}
By Theorem \ref{Thm-eigenvalue-Bauer-Fike}, we have $|\lambda_0| \leq ||E||_2 +  |\alpha_m|$, 
where 
\begin{center}
	$E = \begin{bmatrix}
		0 & \cdots & 0 & -I\\
		\vdots & \ddots & \vdots & \vdots\\
		0 & \cdots & 0 & -I\\
		B_1 & \cdots & B_m & B_0\\
	\end{bmatrix}$.
\end{center} 
Let $\widetilde{E}$ be the matrix 
\begin{center}
	$\widetilde{E} = 
	\begin{bmatrix}
		0 & \cdots & 0 & 1\\
		\vdots & \ddots & \vdots & \vdots\\
		0 & \cdots & 0 & 1\\
		1 & \cdots & 1 & 1\\
	\end{bmatrix}$; 
\end{center} that is, $\widetilde{E}$ is the matrix of size $(m+1) \times (m+1)$,  
whose entries are the spectral norms of the block matrices from $E$. It follows from 
Fact \ref{fact-1} that $||E||_2 \leq ||\widetilde{E}||_2$. It therefore suffices to 
estimate $||\widetilde{E}||_2$ to get a bound on $||E||_2$. Since $\widetilde{E}$ is a 
symmetric matrix, it suffices to compute the eigenvalues of $\widetilde{E}$ to estimate 
$||\widetilde{E}||_2$. To do this, we prove by induction on the size of $\widetilde{E}$ 
that $\text{det} (\widetilde{E} - \lambda I)$ equals 
$(-\lambda)^{m-1} (\lambda^2 - \lambda - m)$.  
When $m = 1, \ \widetilde{E} = \begin{bmatrix}
															0 & 1\\
															1 & 1
                                                           \end{bmatrix}$, 
so that $\text{det}(\widetilde{E} - \lambda I)$ equals 
$(\lambda^2 - \lambda - 1)$. Assume that the result is true for $m = k$; that is, when 
$\widetilde{E} = \begin{bmatrix}
	0 & \cdots & 0 & 1\\
	\vdots & \ddots & \vdots & \vdots\\
	0 & \cdots & 0 & 1\\
	1 & \cdots & 1 & 1\\
\end{bmatrix}$ is a $(k+1) \times (k+1)$ matrix, 
$\text{det} (\widetilde{E} - \lambda I)$ is $(-\lambda)^{k-1} (\lambda^2 - \lambda - k)$. 
Let $m = k+1$ so that 
$\widetilde{E} = \begin{bmatrix}
	0 & \cdots & 0 & 1\\
	\vdots & \ddots & \vdots & \vdots\\
	0 & \cdots & 0 & 1\\
	1 & \cdots & 1 & 1\\
\end{bmatrix}$ is a $(k+2) \times (k+2)$ matrix. Then $\text{det} (\widetilde{E} - \lambda I)$ is 
given by  
\begin{align*}
	\text{det}(\widetilde{E} - \lambda I) = 
	\text{det} \begin{bmatrix}
		-\lambda & \cdots & 0 & 1\\
		\vdots & \ddots & \vdots & \vdots\\
		0 & \cdots & -\lambda & 1\\
		1 & \cdots & 1 & 1 - \lambda\\
	\end{bmatrix}
\end{align*}
\begin{align*}
= (-\lambda) \text{det} \begin{bmatrix}
	-\lambda & \cdots & 0 & 1\\
	\vdots & \ddots & \vdots & \vdots\\
	0 & \cdots & -\lambda & 1\\
	1 & \cdots & 1 & 1 - \lambda\\
\end{bmatrix} \ + \ (-1)^{k+3} \text{det} \begin{bmatrix}
	0 & -\lambda & \cdots & 0\\
	\vdots & \vdots & \ddots & \vdots\\
	0 & 0 & \cdots & -\lambda\\
	1 & 1 & \cdots & 1\\
\end{bmatrix}\\
\end{align*}
$= -\lambda (-\lambda)^{k-1} (\lambda^2 - \lambda - k) + 
(-1)^{2k+5} \ \text{det  \Big(diag}(-\lambda, \cdots, -\lambda)\Big)$,  where 
the first term comes from the induction hypothesis and the second term is by 
expanding the determinant along the first column. On simplification, we get 
$\text{det}(\widetilde{E} - \lambda I) = (-\lambda)^{k}(\lambda^2 - \lambda - (k+1))$. 
Thus, for any positive integer $m$, we have that $\text{det} (\widetilde{E} - \lambda I)$ equals $(-\lambda)^{m-1}(\lambda^2 - \lambda - m)$, whose roots 
are $\lambda = 0, \dfrac{1 \pm (4m+1)^{1/2}}{2}$. We infer from this that 
$||\widetilde{E}||_2 = \dfrac{1 + (4m+1)^{1/2}}{2}$. Note that 
$\Bigg(\dfrac{1 + (4m+1)^{1/2}}{2}\Bigg)^2 = 
\displaystyle\Bigg\{\dfrac{(2m+1)+(4m+1)^{1/2}}{2}\Bigg\}$. We finally conclude  
that $||\widetilde{E}||_2 =  \dfrac{1 + (4m+1)^{1/2}}{2} = 
\displaystyle\Bigg\{\dfrac{(2m+1)+(4m+1)^{1/2}}{2}\Bigg\}^{1/2}$.
\end{proof}	

\medskip

\subsection{Bounds on the eigenvalues of $T(\lambda)$ using rational functions.}
\label{sec-2.3}\hspace*{\fill}

Another method for determining bounds on the eigenvalues is to use 
norms of the coefficient matrices of $T(\lambda)$ and define a rational function whose 
roots are bounds for the eigenvalues of $T(\lambda)$. In \cite{Pallavi-Shrinath-Sachindranath}, 
the authors exploit this technique to derive (only) an upper bound on the set of moduli 
of eigenvalues of a general rational matrix. We state this below (Theorem 
\ref{Thm-eigenvalue-rational function}) for rational matrices of the form given in
Equation \ref{eqn-Standard form}, for the sake of comparison.

\medskip
\begin{theorem}[Theorem $3.8$, \cite{Pallavi-Shrinath-Sachindranath}]\label{Thm-eigenvalue-rational function}
Let $T(\lambda)$ be as given in Equation \eqref{eqn-Standard form}, with an eigenvalue 
$\lambda_0$. Define a real rational function associated with $T(\lambda)$ as follows:
\begin{equation}\label{Eqn-2.3}
q(x) = x -||B_0||-\frac{||B_1||}{x-|\alpha_1|}-\dots-\frac{||B_m||}
{x-|\alpha_m|}, 
\end{equation}
where $||\cdot||$ is any induced matrix norm. Then $|\lambda_0| \leq R$, where $R$ is a 
real positive root of $q(x)$ such that $|\alpha_i| < R$ for all $i=1,2,\cdots,m$.
\end{theorem}

\medskip
In Theorem \ref{Thm-lower bound-eigenvalues}, we derive a lower bound 
on the set of moduli of eigenvalues of a rational matrix as given in Equation 
\ref{eqn-Standard form} that satisfies certain additional assumptions. We do this by 
associating a real rational function, whose positive root gives a lower bound 
for the eigenvalues of rational matrices (satisfying additional assumptions). We make 
use of a Rouch$\text{\'e}$ type theorem for analytic matrix-valued functions 
(Theorem $2.3$ of \cite{Cameron}) and a lemma, whose proof is a simple consequence of 
the intermediate value theorem. We state them in order of preference.

\medskip
\begin{theorem} [Theorem $2.3$, \cite{Cameron}]\label{Thm-Rouche}
Let $A,B: G \rightarrow \mathbb{C}^{n\times n}$, where $G$ is an open connected subset 
of $\mathbb{C}$, be analytic matrix-valued functions. Assume that $A(\lambda)$ is invertible 
on the simple closed curve $\gamma \subseteq G$. If $||A(\lambda)^{-1}B(\lambda)|| < 1$ for 
all $\lambda \in \gamma$, then $A+B$ and $A$ have the same number of eigenvalues inside 
$\gamma$, counting multiplicities. The norm is the matrix norm induced by any norm on 
$\mathbb{C}^n$.
\end{theorem}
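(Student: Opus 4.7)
The plan is to derive this from a matrix-valued argument principle combined with a homotopy argument, exactly as in the classical Rouché theorem for scalar analytic functions. First I would recall that for an analytic matrix-valued function $M: G \to \mathbb{C}^{n \times n}$ with $M(\lambda)$ invertible on the simple closed contour $\gamma$, the number $N(M;\gamma)$ of eigenvalues of $M$ inside $\gamma$, counted with algebraic multiplicity, coincides with the number of zeros of the scalar analytic function $\det M(\lambda)$ inside $\gamma$, and hence by the standard argument principle is given by
$$N(M;\gamma) \;=\; \frac{1}{2\pi i}\oint_\gamma \frac{d}{d\lambda}\log\det M(\lambda)\,d\lambda.$$
The identification of algebraic multiplicities with orders of zeros of $\det M$ can be justified via a local Smith normal form of $M$ at each eigenvalue.

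Next I would factor, on $\gamma$, the sum $A(\lambda)+B(\lambda) = A(\lambda)\bigl(I+A(\lambda)^{-1}B(\lambda)\bigr)$. The hypothesis $\|A(\lambda)^{-1}B(\lambda)\|<1$ forces every eigenvalue of $A(\lambda)^{-1}B(\lambda)$ to have absolute value strictly less than $1$, so $I+A(\lambda)^{-1}B(\lambda)$ is invertible on $\gamma$; in particular $A+B$ is invertible on $\gamma$. Taking determinants and logarithmic derivatives,
$$N(A+B;\gamma)-N(A;\gamma) \;=\; \frac{1}{2\pi i}\oint_\gamma \frac{d}{d\lambda}\log\det\!\bigl(I+A(\lambda)^{-1}B(\lambda)\bigr)\,d\lambda,$$
which is precisely the winding number around $0$ of the closed curve $\lambda\mapsto \det\bigl(I+A(\lambda)^{-1}B(\lambda)\bigr)$.

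The heart of the proof is showing this winding number vanishes. I would use the homotopy $H_t(\lambda):=\det\bigl(I+tA(\lambda)^{-1}B(\lambda)\bigr)$ for $t\in[0,1]$. Since $\|tA^{-1}B\|\le t\|A^{-1}B\|<1$ on $\gamma$ for every such $t$, the matrix $I+tA(\lambda)^{-1}B(\lambda)$ is invertible on $\gamma$, so $H_t(\lambda)\neq 0$ for all $\lambda\in\gamma$. The winding number of $H_t$ about $0$ is therefore a continuous $\mathbb{Z}$-valued function of $t$, hence constant, and at $t=0$ it equals $0$ because $H_0\equiv 1$. (Alternatively, since the eigenvalues of $I+A^{-1}B$ lie in the open disk $|z-1|<1$, the determinant stays in the slit plane $\mathbb{C}\setminus(-\infty,0]$, where a single-valued branch of the logarithm exists, making the contour integral trivially zero.) Combining this with the previous display gives $N(A+B;\gamma)=N(A;\gamma)$.

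The main obstacle I anticipate is the clean justification of the matrix argument principle: one must be careful that the order of vanishing of $\det M(\lambda)$ at an eigenvalue $\lambda_0$ equals the sum of the partial multiplicities (i.e.\ the algebraic multiplicity coming from the Smith/Jordan structure of $M$ at $\lambda_0$), rather than merely the geometric multiplicity. Once this bookkeeping is done, the rest of the argument is a direct transcription of the scalar Rouché theorem, with the factorization $A+B = A(I+A^{-1}B)$ replacing the scalar quotient and the operator-norm bound $\|A^{-1}B\|<1$ playing the role of the scalar inequality $|g|<|f|$.
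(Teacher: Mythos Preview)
Your argument is correct and follows the standard route to a matrix Rouch\'e theorem: reduce to the scalar argument principle via $\det$, factor $A+B=A(I+A^{-1}B)$, and kill the remaining winding number by the homotopy $t\mapsto I+tA^{-1}B$ (or, equivalently, by observing that the eigenvalues of $I+A^{-1}B$ lie in the disk $|z-1|<1$). The bookkeeping concern you raise---that the order of vanishing of $\det M$ at $\lambda_0$ equals the sum of the partial multiplicities from the local Smith form---is exactly the right point to nail down, and once that is in place the rest is routine.

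However, there is nothing to compare against: the paper does \emph{not} prove this theorem. It is quoted verbatim as Theorem~2.3 of \cite{Cameron} and used as a black box in the proof of the lower bound (Theorem on $\tilde R$). So your proposal is not an alternative to the paper's proof but rather a self-contained justification of a result the paper merely cites.
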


\medskip
\begin{lemma}\label{Lem-positive root-rational function}
Let $\displaystyle r(x) = x - a_0 - \frac{a_1}{x-b_1} - \dots-\frac{a_m}{x-b_m}$ be a 
real rational function, where the $a_i$'s are positive and $b_i$'s are nonnegative real 
numbers such that $b_1 < b_2 < \dots < b_m$. Then $r(x)$ has roots $R_1, R_2,\dots, R_{m+1}$ 
such that $R_1 < b_1 < R_2 < b_2 < \dots < R_m < b_m < R_{m+1}$. 
\end{lemma}

We now prove the aforesaid theorem that gives a lower bound for the eigenvalues of 
certain rational matrices.

\medskip
\begin{theorem}\label{Thm-lower bound-eigenvalues}
Let $T(\lambda)$ be as in Equation \eqref{eqn-Standard form}. If $B_0$ is invertible and 
$||B_0^{-1}||^{-1} > 
\displaystyle \frac{||B_1||}{|\alpha_1|} + \dots + \frac{||B_m||}{|\alpha_m|}$, then 
$\tilde{R}\leq |\lambda_0|$ for any eigenvalue $\lambda_0$ of $T(\lambda)$, where 
$\tilde{R}$ is the unique positive root of the real rational function 
$\displaystyle p(x)=x-||B_0^{-1}||^{-1}-\frac{||B_1||}{x-|\alpha_1|}-\dots - 
\frac{||B_m||}{x-|\alpha_m|}$ such that $\tilde{R} < |\alpha_i|$ for all $i= 1, 2 ,\dots,m$.
\end{theorem}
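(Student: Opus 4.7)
The strategy is to apply the Rouch\'e-type result Theorem~\ref{Thm-Rouche}. Write $T(\lambda)=A(\lambda)+B(\lambda)$ with the splitting $A(\lambda)\equiv -B_0$ and $B(\lambda)=\lambda I+\sum_{i=1}^{m}B_i/(\lambda-\alpha_i)$. Since $B_0$ is invertible, $A$ is a constant, nowhere-singular matrix-valued function, and therefore has no eigenvalues in $\mathbb{C}$ at all. Both $A$ and $B$ are analytic on the open disc $G:=\{\lambda\in\mathbb{C}:|\lambda|<\min_i|\alpha_i|\}$, which will contain the curve we integrate around.

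Before invoking Rouch\'e, I would pin down the root $\tilde R$. On the interval $(0,\min_i|\alpha_i|)$ the function $p$ is continuous and strictly increasing (its derivative is a sum of positive terms), tends to $+\infty$ at the upper endpoint, and at $x=0^+$ equals $-\|B_0^{-1}\|^{-1}+\sum_{i=1}^{m}\|B_i\|/|\alpha_i|$, which is negative by the standing hypothesis. Hence there is a unique $\tilde R\in(0,\min_i|\alpha_i|)$ with $p(\tilde R)=0$; this is precisely the root $R_1$ supplied by Lemma~\ref{Lem-positive root-rational function} (now forced to be positive by the hypothesis).

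Now suppose for contradiction that some eigenvalue $\lambda_0$ of $T(\lambda)$ satisfies $|\lambda_0|<\tilde R$, and choose $R'$ with $|\lambda_0|<R'<\tilde R$. On the circle $\gamma=\{|\lambda|=R'\}\subset G$ one has $|\lambda-\alpha_i|\ge |\alpha_i|-R'>0$, so submultiplicativity of the induced norm gives
\begin{align*}
\|A(\lambda)^{-1}B(\lambda)\|
\;\le\;\|B_0^{-1}\|\left(R'+\sum_{i=1}^{m}\frac{\|B_i\|}{|\alpha_i|-R'}\right).
\end{align*}
A short rearrangement shows that this upper bound is strictly less than $1$ precisely when $p(R')<0$, which holds because $p$ is increasing on $(0,\min_i|\alpha_i|)$ with $p(\tilde R)=0$ and $R'<\tilde R$. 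Therefore $\|A(\lambda)^{-1}B(\lambda)\|<1$ on $\gamma$, and Theorem~\ref{Thm-Rouche} forces $T=A+B$ and $A$ to have the same number of eigenvalues inside $\gamma$, namely zero; this contradicts $\lambda_0$ lying inside $\gamma$.

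The main obstacle is that at $x=\tilde R$ the Rouch\'e inequality degenerates to an equality, so one cannot work on the critical circle $\{|\lambda|=\tilde R\}$ directly; the remedy is to shrink to a slightly smaller radius $R'$ and exploit the strict monotonicity of $p$ to convert the marginal estimate into a strict one. A secondary bookkeeping item — ensuring $\tilde R>0$ so that $R'>|\lambda_0|$ is available even when $\lambda_0=0$ — is precisely what the sign hypothesis $\|B_0^{-1}\|^{-1}>\sum\|B_i\|/|\alpha_i|$ provides via $p(0^+)<0$.
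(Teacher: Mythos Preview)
Your proof is correct and follows essentially the same line as the paper's: the same splitting $A(\lambda)=-B_0$, $B(\lambda)=\lambda I+\sum B_i/(\lambda-\alpha_i)$, the same norm estimate $\|A^{-1}B\|\le\|B_0^{-1}\|\big(|\lambda|+\sum\|B_i\|/(|\alpha_i|-|\lambda|)\big)$, and the same application of Theorem~\ref{Thm-Rouche} on a circle of radius just below $\tilde R$. The only cosmetic differences are that you argue by contradiction (choosing $R'$ between $|\lambda_0|$ and $\tilde R$) where the paper lets $\epsilon\to0$, and you justify $p<0$ on $(0,\tilde R)$ via explicit monotonicity of $p$ rather than invoking Lemma~\ref{Lem-positive root-rational function}.
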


\begin{proof} 
From Lemma \ref{Lem-positive root-rational function}, $p(x)$ has a unique root 
$\tilde{R}$ such that $\tilde{R} < |\alpha_1|$. Note that $\displaystyle p(0) = 
-||B_0^{-1}||^{-1}+\frac{||B_1||}{|\alpha_1|}+ \dots + \frac{||B_m||}{|\alpha_m|} < 0$ 
by the given hypothesis. Therefore, by the intermediate value theorem 
$0 < \tilde{R} < |\alpha_1|$. Let $A(\lambda):= -B_0$ and 
$B(\lambda): = \displaystyle I\lambda + \frac{B_1}{\lambda-\alpha_1} + \dots+
\frac{B_m}{\lambda-\alpha_m}$. Then $T(\lambda) = A(\lambda)+B(\lambda)$. Taking $G$ to 
be the disk $D(0,\tilde{R}):= \{z \in \mathbb{C}: |z| < \tilde{R}\}$, we see that 
$A(\lambda)$ and $B(\lambda)$ are analytic matrix-valued 
functions on $G$. Since $p(0)< 0$ and $p(\tilde{R}) = 0$, we have $p(x) < 0$ for all 
$0 \leq x < \tilde{R}$. Therefore, for all $|\lambda| < \tilde{R}$ we have
\begin{equation}\label{Eqn-rat}
|\lambda| - ||B_0^{-1}||^{-1} - \frac{||B_1||}{|\lambda|-|\alpha_1|} - \dots - 
\frac{||B_m||}{|\lambda|-|\alpha_m|} < 0. 
\end{equation}
Now for $|\lambda| < \tilde{R}$, consider 
\begin{align*}
||B(\lambda)|| & = \Big | \Big | I\lambda +
\frac{B_1}{\lambda-\alpha_1} + \dots+\frac{B_m}{\lambda-\alpha_m} \Big | \Big | \\
& \leq |\lambda| + \frac{||B_1||}{|\lambda-\alpha_1|} +\dots+ \frac{||B_m||}
{|\lambda-\alpha_m|}\\
& \leq |\lambda| + \frac{||B_1||}{|\alpha_1| - |\lambda|} + \dots +
\frac{||B_m||}{|\alpha_m| - |\lambda|} \hspace{1cm} (\text{since} \, |\lambda| < 
|\alpha_i|) \\
& = |\lambda| -\frac{||B_1||}{|\lambda|-|\alpha_1|} -\dots- \frac{||B_m||}{|\lambda|-
|\alpha_m|}\\
& < ||B_0^{-1}||^{-1} = ||A(\lambda)^{-1}||^{-1} \hspace{3cm} \big(\text{by} \, 
\eqref{Eqn-rat}\big).
\end{align*}
For any $\epsilon >0$, define $\gamma:= (\tilde{R}-\epsilon) e^{i\theta}$, where 
$0\leq\theta \leq 2 \pi$. Then $||A^{-1}(\lambda)B(\lambda)|| < 1$ for all 
$\lambda \in \gamma$. Since $\epsilon > 0$ is arbitrary, we see from Theorem 
\ref{Thm-Rouche} that the number of eigenvalues of $A(\lambda)$ and $A(\lambda) + 
B(\lambda)$ are same inside $D(0,\tilde{R})$. However, as there are no eigenvalues of 
$A(\lambda)$  inside $D(0,\tilde{R})$,  $T(\lambda) = A(\lambda) + B(\lambda)$ does not 
have any eigenvalues inside $D(0,\tilde{R})$. Thus, for any eigenvalue $\lambda_0$ of 
$T(\lambda)$ we have $\tilde{R} \leq |\lambda_0|$, thereby giving a lower bound 
as required.
\end{proof}

\subsection{Bounds on the eigenvalues of $T(\lambda)$ using polynomials}\label{sec-2.4}\hspace*{\fill}

Let us now consider yet another well known technique that is used to find bounds on the 
eigenvalues of matrix polynomials. The idea is to convert the rational matrix $T(\lambda)$ 
given in Equation \eqref{eqn-Standard form} into a matrix polynomial by multiplying by 
$\displaystyle \prod_{i=1}^{m}(\lambda-\alpha_i)$. That is, 
\begin{equation}\label{Eqn-2.4}
\displaystyle \prod_{i=1}^{m}(\lambda-\alpha_i) T(\lambda) = P(\lambda), 
\end{equation}
a matrix polynomial. It is easy to verify that the set of eigenvalues of $T(\lambda)$ is 
contained in the set of eigenvalues of $P(\lambda)$. While there are 
many techniques in the literature to determine the eigenvalue location of matrix 
polynomials, we restrict ourselves to only one such method due to 
Higham and Tisseur (Lemma $3.1$, \cite{Higham-Tisseur}).  

\medskip

When $m$ is large, it is difficult to determine the coefficients of the matrix polynomial  
$P(\lambda)$ described in the previous paragraph. We therefore restrict ourselves to the 
case when $m = 1$. This is mainly for the sake of comparison and the proof carries 
over verbatim for arbitrary $m$.

\medskip
\begin{theorem}\label{Thm-eigenvalue-Cauchy-Mason}
Let $\displaystyle T(\lambda)= -B_0 + I\lambda + \frac{B_1}{\lambda-\alpha}$, where $\alpha$ 
is a complex number. Then for any eigenvalue $\lambda_0$ of $T(\lambda)$,
$|\lambda_0| \leq R$, where $R$ is the unique positive root of the polynomial
$u(\lambda)= \lambda^2 -||B_0 +\alpha I||\lambda -||\alpha B_0 +B_1||$.
\end{theorem}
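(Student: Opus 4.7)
The plan is to reduce this rational case to the polynomial case already covered by Lemma \ref{Lm-Higham-Tisseur}. As observed in the paragraph preceding the theorem, multiplying $T(\lambda)$ by the scalar polynomial $(\lambda-\alpha)$ yields a matrix polynomial $P(\lambda)$, and every eigenvalue of $T(\lambda)$ is also an eigenvalue of $P(\lambda)$: indeed, if $T(\lambda_0)v=0$ with $v\neq 0$, then $P(\lambda_0)v=(\lambda_0-\alpha)T(\lambda_0)v=0$ (note that necessarily $\lambda_0\neq\alpha$ whenever $B_1\neq 0$, since otherwise $T(\lambda_0)$ would fail to be bounded). So any upper bound on the eigenvalues of $P(\lambda)$ automatically bounds those of $T(\lambda)$.

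The first step is therefore to expand
\[
P(\lambda)=(\lambda-\alpha)T(\lambda)=(\lambda-\alpha)\bigl(-B_0+I\lambda\bigr)+B_1=I\lambda^{2}-(B_0+\alpha I)\lambda+(\alpha B_0+B_1),
\]
which is a degree-$2$ matrix polynomial with leading coefficient $A_2=I$, middle coefficient $A_1=-(B_0+\alpha I)$, and constant term $A_0=\alpha B_0+B_1$. Since $A_2=I$ is nonsingular and $\|A_2^{-1}\|^{-1}=1$, the polynomial $u$ from Lemma \ref{Lm-Higham-Tisseur} specializes, after using $\|-(B_0+\alpha I)\|=\|B_0+\alpha I\|$, to exactly $u(\lambda)=\lambda^{2}-\|B_0+\alpha I\|\lambda-\|\alpha B_0+B_1\|$, matching the statement.

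Invoking Lemma \ref{Lm-Higham-Tisseur} (upper bound half) then gives $|\lambda_0|\le R$ for every eigenvalue $\lambda_0$ of $P(\lambda)$, and hence for every eigenvalue of $T(\lambda)$. The uniqueness and positivity of the real root $R$ of $u$ follow from Descartes' rule of signs (one sign change) applied to $u$, together with $u(0)=-\|\alpha B_0+B_1\|\le 0$ and $u(\lambda)\to+\infty$ as $\lambda\to+\infty$.

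The only subtlety, and what I expect to be the main technical wrinkle, is the hypothesis in Lemma \ref{Lm-Higham-Tisseur} that the constant coefficient $A_0=\alpha B_0+B_1$ also be nonsingular. If $\alpha B_0+B_1$ is singular, then $\lambda=0$ is an eigenvalue of $P$, but in that case the desired inequality $0\le R$ is trivial, and the same Rouch\'e-type argument that underlies Lemma \ref{Lm-Higham-Tisseur} still produces the upper bound (one simply does not get a nontrivial lower bound). Also, if $B_1=0$ so that $\alpha$ itself becomes an eigenvalue of $P$ that is not an eigenvalue of $T$, this does not harm the upper bound since we are only containing the spectrum of $T$ inside that of $P$. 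So the proof should go through cleanly with at most a one-line remark addressing these degenerate cases.
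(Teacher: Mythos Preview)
Your proposal is correct and follows exactly the same route as the paper: multiply by $(\lambda-\alpha)$ to obtain $P(\lambda)=I\lambda^{2}-(B_0+\alpha I)\lambda+(\alpha B_0+B_1)$, observe that every eigenvalue of $T(\lambda)$ is an eigenvalue of $P(\lambda)$, and invoke Lemma~\ref{Lm-Higham-Tisseur}. Your version is in fact more careful than the paper's three-line proof, since you explicitly address the nonsingularity hypothesis on $A_0$ and the degenerate case $B_1=0$, neither of which the paper comments on.
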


\begin{proof}
Let $P(\lambda) = (\lambda-\alpha)T(\lambda) = I\lambda^2-(B_0 +\alpha I)\lambda +
(\alpha B_0+B_1)$. Note that if $\lambda_0 \in \mathbb{C}$ is an eigenvalue of 
$T(\lambda)$, then $\lambda_0$ is an eigenvalue of $P(\lambda)$. We deduce the desired 
conclusion from Lemma $3.1$, \cite{Higham-Tisseur}.
\end{proof}

\subsection{Estimation of bounds on the largest root of scalar rational function $q(x)$}
\label{sec-2.5}
\hspace{\fill}

Let $T(\lambda)$ be as in Equation \eqref{eqn-Standard form}. By Theorem 
\ref{Thm-eigenvalue-rational function}, we know that the largest real root $R$, 
of the scalar rational function $q(x)$ given in Equation \eqref{Eqn-2.3} is an upper bound 
on the moduli of the eigenvalues of $T(\lambda)$. Since $q(x)$ is a $1 \times 1$ 
rational matrix, its roots are contained in the set of eigenvalues of the 
matrix 
\begin{center}
$C_q=\begin{bmatrix}
|\alpha_1| & 0 & \cdots & 0 & -1 \\
0 & |\alpha_2| & \cdots & 0 & -1 \\
\vdots & \vdots & \ddots & \vdots & \vdots \\
0 & 0 & \cdots & |\alpha_m| & -1 \\
-||B_1|| & -||B_2|| & \cdots & -||B_m|| & ||B_0||
\end{bmatrix}$
\end{center}
of size $(m+1) \times (m+1)$. We now give a bound on $R$ using 
numerical radius inequalities on $C_q$. We begin with a few notations. Given 
$A \in M_n(\mathbb{C})$ the numerical range and the numerical radius of $A$ are denoted 
by $W(A)$ and $w(A)$ respectively and are defined as 
$W(A):= \{ x^*Ax : x \in \mathbb{C}^n \, \text{and}
\, ||x|| =1\}$ and $w(A) = \sup\{|\lambda| : \lambda \in W(A)\}$. 
If $\rho (A)$ denotes the spectral radius of $A$, then for any eigenvalue $\mu_0$ of $A$ 
we have $|\mu_0| \leq \rho(A) \leq w(A)$. 

\medskip

We use the following lemma to estimate a bound on $R$. 

\begin{lemma}[\cite{Abu-Kittaneh}, Lemma $3$]\label{Lem-numerical radius bound-2}
Let $A \in M_k(\mathbb{C}), B \in M_{k,s}(\mathbb{C}), C \in M_{s,k}(\mathbb{C})$ and 
$D \in M_{s}(\mathbb{C})$, and let $K = 
\begin{bmatrix}
A & B \\
C & D
\end{bmatrix}$. Then $w(K) \leq \displaystyle \frac{1}{2}\Big(w(A)+ w(D) + 
\sqrt{(w(A)-w(D))^2 + 4w^2(K_0)}\Big)$, where $K_0 = \begin{bmatrix}
0 & B \\
C & 0
\end{bmatrix}$. 
\end{lemma}

\medskip
We first derive a bound on $R$ when the coefficients are unitary matrices.

\medskip
\begin{theorem}\label{Thm-bound-numerical radius-2}
Let $q(x)$ be a rational function as in Equation \eqref{Eqn-2.3}, and $R$ be the largest 
root of $q(x)$. If $B_1, \ldots , B_m$ are unitary matrices, then 
\begin{center}
$R \leq \displaystyle \frac{1}{2}\Bigg(|\alpha_m|+||B_0||_2 +\sqrt{(|\alpha_m|-||B_0||_2)^2 + 
4 m}\Bigg)$.
\end{center} 
\end{theorem}
	
\begin{proof}
Let $B_1, \ldots , B_m$ be unitary matrices. Then $||B_i||_2=1$ for $1 \leq i \leq m$ and 
\begin{center}
$C_q= \begin{bmatrix}
\begin{array}{c c c c|c}
|\alpha_1| & 0 & \cdots & 0 & -1 \\
0 & |\alpha_2| & \cdots & 0 & -1 \\
\vdots & \vdots & \ddots & \vdots & \vdots \\
0 & 0 & \cdots & |\alpha_m| & -1 \\
\hline
-1 & -1 & \cdots & -1 & ||B_0||_2
\end{array}
\end{bmatrix}  = 
\begin{bmatrix}
A & B \\
C & D
\end{bmatrix}$.
\end{center}
Therefore, $w(A)=|\alpha_m|$ and $w(D)=||B_0||_2$.
Let $K_0 := \begin{bmatrix}
0  & \cdots & 0 & -1 \\
0  & \cdots & 0 & -1 \\
\vdots  & \ddots & \vdots & \vdots \\
0 & \cdots & 0 & -1 \\
-1 & \cdots & -1 & 0
\end{bmatrix}$.
Since $K_0$ is a real symmetric matrix, $\rho(K_0) = w(K_0)$. 
The characteristic polynomial of $K_0$ is 
$\displaystyle t(\lambda)= (-1)^{m+1} \lambda^{m+1}+ 
(-1)^m m \lambda^{m-1}$ so that the eigenvalues of $K_0$ are $0, \, \pm \sqrt{m}$. 
Thus, $\rho(K_0)=w(K_0)= \displaystyle \sqrt{m}$. The desired conclusion then follows 
from Lemma \ref{Lem-numerical radius bound-2}.
\end{proof}

\medskip
In the above theorem, if $B_1, B_2, \ldots, B_m$ are arbitrary matrices, 
then we have the following result.

\medskip
\begin{theorem}\label{Thm-bound-numerical radius-3}
Let $q(x)$ be a rational function as in Equation \eqref{Eqn-2.3}, and $R$ be the 
largest root of $q(x)$. Then 
\begin{center}
$R \leq \displaystyle \frac{1}{2}\Bigg(|\alpha_m|+||B_0|| + 
\sqrt{(|\alpha_m|-||B_0||)^2 + 4 k}\Bigg)$,
\end{center} 
where $k = \max \left\{m , \displaystyle \sum_{i=1}^{m}||B_i||^2\right\}$ and 
$||\cdot ||$ is any induced matrix norm.
\end{theorem}

\begin{proof}
Consider 
\begin{center}	
$C_q =\begin{bmatrix}
\begin{array}{c c c c|c}
		|\alpha_1| & 0 & \cdots & 0 & -1 \\
		0 & |\alpha_2| & \cdots & 0 & -1 \\
		\vdots & \vdots & \ddots & \vdots & \vdots \\
		0 & 0 & \cdots & |\alpha_m| & -1 \\
		\hline
		-||B_1|| & -||B_2|| & \cdots & -||B_m|| & ||B_0||
\end{array}
\end{bmatrix}=
\begin{bmatrix}
A & B \\
C & D
\end{bmatrix} $.
\end{center}
Then $w(A)=|\alpha_m|$ and $w(D)=||B_0||$.
Let $K_0 := \begin{bmatrix}
		0  & \cdots & 0 & -1 \\
		0  & \cdots & 0 & -1 \\
		\vdots  & \ddots & \vdots & \vdots \\
		0 & \cdots & 0 & -1 \\
		-||B_1|| & \cdots & -||B_m|| & 0
	\end{bmatrix}$.
It is easy to verify that $||K_0||_2 = \max \left\{\sqrt{m}, \displaystyle 
\sqrt{\sum_{i=1}^{m}||B_i||^2} \right\}$. Since $ w(K_0) \leq ||K_0||_2$, the 
conclusion follows from Lemma \ref{Lem-numerical radius bound-2}.
\end{proof}

\medskip

\begin{remark}\label{rem-2.5-1}
If $\lambda_0$ is an eigenvalue of $T(\lambda)$ as given in Equation 
\eqref{eqn-Standard form}, then by the above theorem 
$|\lambda_0| \leq \displaystyle \frac{1}{2}\left(|\alpha_m|+||B_0|| + 
\sqrt{(|\alpha_m|-||B_0||)^2 + 4 \displaystyle k}\right)$, where 
$k = \max \left\{m , \displaystyle \sum_{i=1}^{m}||B_i||^2\right\}$.
\end{remark}

\section{Comparison of bounds}\label{sec-3}

In this section, we compare the bounds obtained in Section \ref{sec-2}. In general, we 
cannot determine which method gives a better bound (see Remark \ref{Rem-Bound comparison}) 
for arbitrary matrix coefficients. But when the coefficients of the rational matrices as 
given in Equation \eqref{eqn-Standard form} are unitary matrices and the norm is the 
spectral norm, we have the following comparison of the bounds given in 
Theorems \ref{Thm-eigenvalue-unitary-Bauer-Fike} and 
\ref{Thm-eigenvalue-rational function} and Theorems \ref{Thm-eigenvalue-unitary-Bauer-Fike} 
and \ref{Thm-bound-numerical radius-2}.

\medskip
\begin{theorem}\label{Thm-bound comparison}
Let $\displaystyle T(\lambda)=-B_0+I\lambda+\frac{B_1}{\lambda-\alpha_1} + \dots+\frac{B_m}
{\lambda-\alpha_m}$, where the $B_i$'s are $n \times n$ unitary matrices and 
$\alpha_i$'s are distinct complex numbers. Then the bound given in 
Theorem \ref{Thm-eigenvalue-rational function} 
is better than the bound given in Theorem \ref{Thm-eigenvalue-unitary-Bauer-Fike}.
\end{theorem}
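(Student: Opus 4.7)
The plan is to compare the two bounds directly by substituting the Bauer--Fike expression into the rational function $q(x)$ from Theorem \ref{Thm-eigenvalue-rational function} and using monotonicity. Since each $B_i$ is unitary, $\|B_i\|_2 = 1$, so
\[
q(x) \;=\; x - 1 - \sum_{i=1}^{m} \frac{1}{x - |\alpha_i|}.
\]
Let $M := \max\{|\alpha_i|:1\le i\le m\}$, let $R$ be the root of $q$ that lies in $(M,\infty)$ (which exists by Lemma \ref{Lem-positive root-rational function}), and let
\[
K \;:=\; \left\{\frac{(2m+1)+(4m+1)^{1/2}}{2}\right\}^{1/2},
\]
so that the Bauer--Fike bound is $K+M$. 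The goal is to show $R \le K+M$.

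The crucial algebraic observation, which I would establish first, is that $K$ is exactly the positive root of $x^{2}-x-m = 0$, i.e.\ $K = \tfrac{1+(4m+1)^{1/2}}{2}$ and hence $K^{2} = K+m$. This is a short squaring computation: squaring $\tfrac{1+(4m+1)^{1/2}}{2}$ gives $\tfrac{(2m+1)+(4m+1)^{1/2}}{2}$, which matches $K^{2}$. Identifying this identity is the conceptual heart of the proof; once it is in hand the rest is estimation.

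Next I would evaluate $q$ at $K+M$. Because $M \ge |\alpha_i|$ for every $i$, we have $(K+M) - |\alpha_i| \ge K$, so $\tfrac{1}{(K+M)-|\alpha_i|} \le \tfrac{1}{K}$. Therefore
\[
q(K+M) \;\ge\; (K+M) - 1 - \frac{m}{K} \;=\; \frac{K^{2}-K-m}{K} + M \;=\; M \;\ge\; 0,
\]
the last equality using $K^{2}=K+m$. Since $q'(x) = 1 + \sum_{i=1}^{m}(x-|\alpha_i|)^{-2} > 0$ on $(M,\infty)$, the function $q$ is strictly increasing on that interval, and from $q(R)=0 \le q(K+M)$ one concludes $R \le K+M$, which is the desired comparison.

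The only subtlety is sharpness: the inequality $R \le K+M$ becomes strict as soon as $M>0$, and degenerates to equality precisely in the degenerate case where every pole sits at the origin. I do not anticipate any technical obstacle beyond spotting the identity $K^{2}=K+m$; that identity is what forces the two seemingly different bounds to interlock so cleanly.
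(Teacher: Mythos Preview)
Your argument is correct and mirrors the paper's proof: both evaluate $q$ at the Bauer--Fike bound $K+M$, bound each term of the sum by $1/K$, and invoke the identity $K^{2}-K-m=0$ to conclude. The only cosmetic difference is that you appeal to the strict monotonicity of $q$ on $(M,\infty)$ where the paper uses the intermediate value theorem, and you (correctly) flag the boundary case $M=0$ where equality can occur.
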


\begin{proof}
Let $R_1$ be the bound for the eigenvalues of $T(\lambda)$ given in Theorem 
\ref{Thm-eigenvalue-unitary-Bauer-Fike}; that is, $R_1 = a + |\alpha_m|$, where 
$a = \displaystyle \Bigg\{\frac{(2m+1)+(4m+1)^{1/2}}{2}\Bigg\}^{1/2}$. Let $R_2$ be the 
eigenvalue bound given in Theorem \ref{Thm-eigenvalue-rational function}, which is the 
unique root of the real rational function 
$\displaystyle q(x)=x- 1 -\frac{1}{x-|\alpha_1|}-\dots - \frac{1}{x-|\alpha_m|}$ such 
that $|\alpha_i| < R_2$ for all $i=1,2,\dots,m$. If $m=1$ and $\alpha_1 = 0$, then 
$R_1 = R_2 = \displaystyle \left(\frac{3+\sqrt{5}}{2}\right)^{1/2}$. 
Therefore, we assume that $m \geq 1$ and $\alpha_m \neq 0$. 
Note that $R_1 > |\alpha_m|$ and $\displaystyle \lim_{x \rightarrow |\alpha_m|^+}q(x) = - \infty$. 
Consider
\begin{align*}
q(R_1) 
& = q(a+|\alpha_m|) \\
& = a+|\alpha_m|-1 -\frac{1}{a + |\alpha_m| -|\alpha_1|}-\dots - \frac{1}{a +|\alpha_m|-
|\alpha_m|}\\
& = |\alpha_m|+ \frac{a^2-a-1}{a} -\frac{1}{a + |\alpha_m| -|\alpha_1|}-\dots - \frac{1}
{a +|\alpha_m|-
|\alpha_{m-1}|} \\
& > |\alpha_m| + \frac{a^2-a-1}{a} -\frac{1}{a}- \dots - \frac{1}{a} \\
& = |\alpha_m| + \frac{a^2-a-1}{a} -\frac{m-1}{a} = |\alpha_m| + \frac{a^2-a-m}{a}.
\end{align*}
Since $m \geq 1$, the only positive root of $x^2 -x -m =0$ is 
$x_0 = \displaystyle \frac{1+ (4m +1)^{1/2}}{2}$. On squaring we get, 
$x_0^2 = \displaystyle \frac{(2m+1)+ (4m +1)^{1/2}}{2} = a^2$. Therefore, 
$x_0 = a = \displaystyle \Bigg\{\frac{(2m+1)+ (4m +1)^{1/2}}{2}\Bigg\}^{1/2}$ is a root 
of $x^2 -x -m = 0$ and hence $a^2-a-m = 0$. This in turn implies $q(R_1) > |\alpha_m| > 0$. 
The intermediate value theorem ensures that $q(x)$ has a root in 
$(|\alpha_m|,R_1)$. But $R_2$ is the only root of $q(x)$ such that $|\alpha_m| < R_2$. 
Therefore, $R_2 \in (|\alpha_m|,R_1)$. Hence, $R_2 < R_1$.
\end{proof}

\medskip
\begin{theorem}\label{Thm-bound comparison-2}
Let $\displaystyle T(\lambda)=-B_0+I\lambda+\frac{B_1}{\lambda-\alpha_1} 
+ \dots+\frac{B_m}{\lambda-\alpha_m}$, where the $B_i$'s are $n \times n$ 
unitary matrices and the $\alpha_i$'s are distinct complex numbers. Then the 
bound given in Theorem \ref{Thm-bound-numerical radius-2} is better than the bound 
given in Theorem \ref{Thm-eigenvalue-unitary-Bauer-Fike}, that is,

\begin{center}
	$\displaystyle \frac{1}{2}\Big(1+|\alpha_m|+\sqrt{(|\alpha_m|-1)^2+4m}\Big) \leq \Bigg\{\frac{(2m+1)+(4m+1)^{1/2}}{2}\Bigg\}^{1/2} + |\alpha_m|$.
\end{center}
\end{theorem}
	
\begin{proof}
Let $R_1= a + |\alpha_m|$ be the bound for the eigenvalues of 
$T(\lambda)$ given in Theorem \ref{Thm-eigenvalue-unitary-Bauer-Fike}, 
where $a = \displaystyle \Bigg\{\frac{(2m+1)+(4m+1)^{1/2}}{2}\Bigg\}^{1/2}$. 
Let $R_4$ be the bound on the eigenvalues of $T(\lambda)$ obtained in 
Theorem \ref{Thm-bound-numerical radius-2}. Since the $B_i$'s are unitary matrices, 
$R_4 = \displaystyle \frac{1}{2}\Big(1+|\alpha_m|+\sqrt{(|\alpha_m|-1)^2+4m}\Big)$. If $m=1$ 
and $\alpha_1=0$, then $R_1 = R_4 = \displaystyle \left(\frac{3 + \sqrt{5}}{2}\right)^{1/2}$. Consider 
$m \geq 1$ and $\alpha_m \neq 0$. Define a scalar rational function 
$w(x):=x -1 -\displaystyle \frac{m}{x-|\alpha_m|}$. The only zeros of 
$w(x)$ are $R_4' =\displaystyle \frac{1}{2}\Big(1+|\alpha_m|-
\sqrt{(|\alpha_m|-1)^2+4m}\Big)$ and $R_4=\displaystyle \frac{1}{2}
\Big(1+|\alpha_m|+\sqrt{(|\alpha_m|-1)^2+4m}\Big)$. By Lemma \ref{Lem-positive root-rational function}, 
$R_4' \in (-\infty, |\alpha_m|)$ and $R_4 \in (|\alpha_m|, \infty)$. The remaining 
part of the proof follows as in the previous theorem.
\end{proof}

\medskip
Some remarks are in order. In what follows, we work with the spectral norm.

\medskip
\begin{remark}\label{Rem-Bound comparison}\hfill

\begin{enumerate}
\item In Theorems \ref{Thm-bound comparison} and \ref{Thm-bound comparison-2}, 
if the matrices are not unitary, we cannot determine which theorem gives 
the better bound. For example, consider $T(\lambda) = \displaystyle 
-B_0+I\lambda+\frac{B_1}{\lambda-\alpha_1}$, where $B_0= 
\begin{bmatrix}
0 & 0\\ 
1 & 0
\end{bmatrix}$ and $B_1= 
\begin{bmatrix}
1 & 0\\
0 & 0
\end{bmatrix}$. 
\begin{itemize}
\item[a.] If $\alpha_1=0.1$, then the bound given in Theorem 
\ref{Thm-eigenvalue-Bauer-Fike} is $R_1 = 1.51$ and the bound given in 
Theorem \ref{Thm-eigenvalue-rational function} is $R_2 =1.65$. Therefore, 
$R_1 < R_2$.
\item[b.]If $\alpha_1=1$, then the bound given in Theorem 
\ref{Thm-eigenvalue-Bauer-Fike} is $R_1 = 2.41$ and the bound given in 
Theorem \ref{Thm-eigenvalue-rational function} is $R_2 =2$. Therefore, 
$R_2 < R_1$.
\item[c.] Again, if $\alpha_1=0.1$, the bound given in Theorem \ref{Thm-bound-numerical radius-2} 
is $R_4 = 1.65$. Therefore, $R_1 < R_4$. If $\alpha = 1$ the bound given in 
Theorem \ref{Thm-bound-numerical radius-2} is $R_4 = 2$. Hence, $R_4 < R_1$.
\end{itemize} 

\item We cannot say which theorem gives a better bound between 
Theorem \ref{Thm-eigenvalue-unitary-Bauer-Fike} and Theorem 
\ref{Thm-eigenvalue-Cauchy-Mason}, even when the coefficients of 
$T(\lambda)$ are unitary matrices. For example consider, $T(\lambda) =
\displaystyle -B_0+I\lambda+\frac{B_1}{\lambda-\alpha_1}$, where 
$B_0 = B_1 = I_2$.
\begin{itemize}
\item[a.] If $\alpha_1 = 1$, then the bound given in Theorem 
\ref{Thm-eigenvalue-unitary-Bauer-Fike} is $R_1 = 2.62 $ and the bound 
given in Theorem \ref{Thm-eigenvalue-Cauchy-Mason} is $R_3 =1+ \sqrt{3} 
= 2.73$. Therefore, we have $R_1 < R_3$.
\item[b.] If and $\alpha_1 = i$, then the bound given in Theorem 
\ref{Thm-eigenvalue-unitary-Bauer-Fike} is $R_1 = 2.62 $ and the bound 
given in Theorem \ref{Thm-eigenvalue-Cauchy-Mason} is $ R_3 = 2.09$. 
In this case, $R_3 < R_1$. 
\end{itemize}

\item The same phenomenon happens with Theorems 
\ref{Thm-eigenvalue-rational function} and \ref{Thm-eigenvalue-Cauchy-Mason}. 
Consider the same example as in $(2)$.
\begin{itemize}
\item[a.] If $\alpha_1 = -1.5$, then the bounds given in Theorems 
\ref{Thm-eigenvalue-rational function} and \ref{Thm-eigenvalue-Cauchy-Mason} 
are $R_2 = 2.28$ and $R_3  = 1$ respectively. Thus, $R_3 < R_2$.
\item[b.] If $\alpha_1 = 1.5$, then the bounds given in Theorems 
\ref{Thm-eigenvalue-rational function} and \ref{Thm-eigenvalue-Cauchy-Mason} 
are $R_2 = 2.28$ and $R_3 =3.27 $ respectively. In this case, we have $R_2 < R_3$. 
\end{itemize}

\item The bounds obtained in Theorem \ref{Thm-bound-numerical radius-2} and Theorem 
\ref{Thm-eigenvalue-Cauchy-Mason} are also not comparable. Consider the same 
example given in $(2)$.
\begin{itemize}
\item[a.] If $\alpha_1 = 1$, then bounds obtained in Theorems \ref{Thm-bound-numerical 
radius-2} and \ref{Thm-eigenvalue-Cauchy-Mason} are $R_4 = 2$ and $R_3 = 2.73$ 
respectively. Therefore, $R_4 < R_3$.
\item[b.] If $\alpha_1 = -0.5$, then bounds obtained in Theorems \ref{Thm-bound-numerical 
radius-2} and \ref{Thm-eigenvalue-Cauchy-Mason} are $R_4 = 1.78$ and $R_3 = 1$ respectively. 
Therefore, $R_3 < R_4$.
\end{itemize}   

\end{enumerate}
\end{remark}

\medskip 
\section{Numerical results}\label{sec-4}\hspace*{\fill}
 
Bounds on the moduli of eigenvalues of rational matrices are less studied 
than bounds on the moduli of eigenvalues of matrix polynomials in the literature. 
Recently in \cite{Pallavi-Shrinath-Sachindranath}, the authors discuss some interesting 
techniques to derive eigenvalue bounds for general rational matrices. There are methods to 
determine approximate eigenvalues in specific regions using iterative methods and rational 
approximation methods (see for instance, \cite{Saad-El-Guide-Miedlar}, \cite{Su-Bai}), 
which are entirely different problems from ours. However, one can convert the rational 
matrix to a matrix polynomial and use existing results in the literature on matrix 
polynomials to compare these bounds. We do this and compare our bounds with the bounds 
given in \cite{Le-Du-Nguyen}. We present three examples of REPs and compare the bounds 
obtained in Section \ref{sec-2} with the bounds given in \cite{Pallavi-Shrinath-Sachindranath} 
and \cite{Le-Du-Nguyen}. In Example \ref{Example-1}, we see that some of the bounds 
obtained in this manuscript are better than the bounds given in 
\cite{Pallavi-Shrinath-Sachindranath} and \cite{Le-Du-Nguyen}. However, in general, 
any one of the above methods is not consistently better than the others.

\medskip
\begin{example}\label{Example-1}
Let $T(\lambda)= -B_0 + I \lambda + \displaystyle \frac{B_1}{\lambda -0.1}$, where $I$ is the 
identity matrix of size $3$, $B_0 = \begin{bmatrix}
	2 & -1 & 0 \\
	-1 & 2 & -1 \\
	0 & -1 & 1
\end{bmatrix}$ and $B_1 = \begin{bmatrix}
-1 & 0 & 1\\
0 & -1 & 1 \\
-1 & 0 & 1
\end{bmatrix}$. Note that the maximum of the moduli of eigenvalues of $T(\lambda)$ is $3.54$.

\medskip

\begin{table}[ht]
\centering
\begin{tabular}{l | l | @{\hspace{0.4cm}} | l | l}
\hline
{Results} & {Bounds} & {Results} & {Bounds} \\
\hline
\hline
Theorem \ref{Thm-eigenvalue-Bauer-Fike} & 3.70 & Theorem $3.2$ of \cite{Le-Du-Nguyen} 
& 4.15\\
Theorem \ref{Thm-eigenvalue-rational function} (Theorem $3.8$, \cite{Pallavi-Shrinath-Sachindranath}) & 3.83 & Corollary $3.2.1$ of \cite{Le-Du-Nguyen} 
& 5.32 \\
Theorem \ref{Thm-eigenvalue-Cauchy-Mason} & 3.90 & Theorem $3.3$ of \cite{Le-Du-Nguyen} & 4.35 \\
Theorem \ref{Thm-bound-numerical radius-3} & 4.36 & Theorem $3.4$ of \cite{Le-Du-Nguyen} & 4.12 \\
Theorem $3.9 (1)$ of \cite{Pallavi-Shrinath-Sachindranath} & 5.42 & Corollary $3.4.2$ 
of \cite{Le-Du-Nguyen} & 5.07 \\
Theorem $3.9 (2)$ of \cite{Pallavi-Shrinath-Sachindranath} & 4.25 & Corollary $3.4.4$ 
of \cite{Le-Du-Nguyen} & 3.99 \\
Theorem $3.9 (3)$ of \cite{Pallavi-Shrinath-Sachindranath} & 3.91 & Corollary $3.4.6$ 
of \cite{Le-Du-Nguyen} & 4.91 \\
Corollary $3.11$ of \cite{Pallavi-Shrinath-Sachindranath} & 5.37 & Theorem $3.6$ 
of \cite{Le-Du-Nguyen} & 4.35 \\
\hline
\end{tabular}
\vspace{0.2cm}
\caption{Bounds obtained from Section \ref{sec-2} and references \cite{Pallavi-Shrinath-Sachindranath}, 
\cite{Le-Du-Nguyen} for Example \ref{Example-1}.}
\label{table:1}
\end{table}

From Table \ref{table:1}, we can conclude that the bound obtained using Theorem 
\ref{Thm-eigenvalue-Bauer-Fike} in this manuscript is better than other bounds for Example \ref{Example-1}. 
\end{example}

\newpage
\medskip
\begin{example}\label{Example-2}
Let $T(\lambda)= -B_0 + I \lambda + \displaystyle \frac{B_1}{\lambda -2}$, where $I$ is the 
identity matrix of size $3$, $B_0 = \begin{bmatrix}
	1 & 0 & 0 \\
	0 & 1 & 0 \\
	0 & 0 & 2
\end{bmatrix}$ and $B_1 = \begin{bmatrix}
	0 & 0 & 0 \\
	0 & 0 & 0 \\
	0 & 0 & -1
\end{bmatrix}$. The maximum of the moduli of eigenvalues of $T(\lambda)$ is $3.00$.

\medskip
\begin{table}[h!]
\centering
\begin{tabular}{l | l | @{\hspace{0.4cm}} | l | l}
\hline
{Results} & {Bounds} & {Results} & {Bounds} \\
\hline
\hline
Theorem \ref{Thm-eigenvalue-Bauer-Fike} & 4.41 & Theorem $3.2$ of \cite{Le-Du-Nguyen} & 4.83 \\
Theorem \ref{Thm-eigenvalue-rational function} (Theorem $3.8$, \cite{Pallavi-Shrinath-Sachindranath}) & 3.00 & Corollary $3.2.1$ of \cite{Le-Du-Nguyen} & 7.99\\
Theorem \ref{Thm-eigenvalue-Cauchy-Mason} & 4.65 & Theorem $3.3$ of \cite{Le-Du-Nguyen} & 5.00  \\
Theorem \ref{Thm-bound-numerical radius-3} & 3.00 & Theorem $3.4$ of \cite{Le-Du-Nguyen} & 4.79 \\
Theorem $3.9 (1)$ of \cite{Pallavi-Shrinath-Sachindranath} & 3.00 & Corollary $3.4.2$ of \cite{Le-Du-Nguyen} & 6.32 \\
Theorem $3.9 (2)$ of \cite{Pallavi-Shrinath-Sachindranath} & 3.00 & Corollary $3.4.4$ of \cite{Le-Du-Nguyen} & 4.14 \\
Theorem $3.9 (3)$ of \cite{Pallavi-Shrinath-Sachindranath} & 3.00 & Corollary $3.4.6$ of \cite{Le-Du-Nguyen} & 5.12 \\
Corollary $3.11$ of \cite{Pallavi-Shrinath-Sachindranath} & 3.50 & Theorem $3.6$ of \cite{Le-Du-Nguyen} & 5.00 \\
\hline
\end{tabular}
\vspace{0.2cm}
\caption{Bounds obtained from Section \ref{sec-2} and references \cite{Pallavi-Shrinath-Sachindranath}, 
\cite{Le-Du-Nguyen} for Example \ref{Example-2}.}
\label{table:2}
\end{table}

In Example \ref{Example-2}, the bound obtained from Theorem 
\ref{Thm-bound-numerical radius-3} of this manuscript is better than bounds 
obtained using methods given in \cite{Le-Du-Nguyen} and it actually 
coincides with the bounds obtained from Theorems $3.8$ and $3.9$ of \cite{Pallavi-Shrinath-Sachindranath}.
\end{example}

\medskip
\begin{example}\label{Example-3}
The following example arises in the finite element 
discretization of a boundary problem describing the eigenvibration of a string with a 
load of mass attached by an elastic spring. We refer readers to \cite{Betcke-Higham-Tisseur} 
for details about this particular REP. Let 
$T(\lambda)v=\left(A-B\lambda+C\frac{\lambda}{\lambda-\alpha}\right)v,$ 
where $A= 
\begin{bmatrix}
	6 & -3 & 0\\
	-3 & 6 & -3\\
	 0 & -3 & 3
\end{bmatrix}$, $B=\frac{1}{18}\begin{bmatrix}
	4 & 1 & 0 \\
	1 & 4  &  1\\
	0 & 1 & 2
\end{bmatrix}$, $C =\begin{bmatrix}
	0 & 0 & 0\\
	0 & 0 & 0 \\
	0 & 0 & 1
\end{bmatrix}$ and $\alpha =1$. Note that $B$ is invertible, therefore the above 
REP is the same as 
$\left(-(A+C)B^{-1}+I\lambda-CB^{-1}\frac{\alpha}{\lambda-\alpha}\right)v=0$. The maximum of 
the moduli of eigenvalues of $T(\lambda)$ is $94.60$.
	
\medskip
	
\begin{table}[h!]
\centering
\begin{tabular}{l | l | @{\hspace{0.4cm}} | l | l}
\hline
{Results} & {Bounds} & {Results} & {Bounds} \\
\hline
\hline
Theorem \ref{Thm-eigenvalue-Bauer-Fike} & 98.46 & Theorem $3.2$ of \cite{Le-Du-Nguyen} & 99.24 \\
Theorem \ref{Thm-eigenvalue-rational function} (Theorem $3.8$, \cite{Pallavi-Shrinath-Sachindranath}) & 97.38 & Corollary $3.2.1$ of \cite{Le-Du-Nguyen} & 191.47 \\
Theorem \ref{Thm-eigenvalue-Cauchy-Mason} & 99.18 & Theorem $3.3$ of \cite{Le-Du-Nguyen} & 99.25 \\
Theorem \ref{Thm-bound-numerical radius-3} & 98.46 & Theorem $3.4$ of \cite{Le-Du-Nguyen} & 98.47 \\
Theorem $3.9 (1)$ of \cite{Pallavi-Shrinath-Sachindranath} & 108.04 & Corollary $3.4.2$ of \cite{Le-Du-Nguyen} & 101.13  \\
Theorem $3.9 (2)$ of \cite{Pallavi-Shrinath-Sachindranath} & 98.27 & Corollary $3.4.4$ of \cite{Le-Du-Nguyen} & 97.33 \\
Theorem $3.9 (3)$ of \cite{Pallavi-Shrinath-Sachindranath} & 97.63 & Corollary $3.4.6$ of \cite{Le-Du-Nguyen} & 98.33 \\
Corollary $3.11$ of \cite{Pallavi-Shrinath-Sachindranath} & 146.40  & Theorem $3.6$ of \cite{Le-Du-Nguyen} & 99.25\\
\hline
\end{tabular}
\vspace{0.2cm}
\caption{Bounds obtained from Section \ref{sec-2} and references \cite{Pallavi-Shrinath-Sachindranath}, 
\cite{Le-Du-Nguyen} for Example \ref{Example-3}.}
\label{table:3}
\end{table}	

Table \ref{table:3} shows that the bound obtained from Corollary $3.4.4$ of \cite{Le-Du-Nguyen} 
is sharper than other bounds for Example \ref{Example-3}. 
	
\end{example}

\medskip
In all three examples the upper bound obtained using Theorem $2.1 (4)$ of \cite{Bini} coincides 
with the bound given in Theorem \ref{Thm-eigenvalue-Cauchy-Mason}. Let us 
point out that Roy and Bora \cite{Roy-Bora} also study the eigenvalue location of quadratic 
matrix polynomials and compare their bounds with that of \cite{Bini}; however, the bounds 
in \cite{Bini} are better than that of \cite{Roy-Bora}. In order to find a lower bound using 
Theorem \ref{Thm-lower bound-eigenvalues}, the coefficient matrices should satisfy the 
hypothesis given in the theorem. Note that the coefficient matrices 
of rational matrices given in Examples \ref{Example-1} and \ref{Example-3} do not 
satisfy the hypothesis of Theorem \ref{Thm-lower bound-eigenvalues}. Therefore, we find 
a lower bound only for Example \ref{Example-2}. Note that the minimum of the moduli of 
eigenvalues of $T(\lambda)$ given in Example \ref{Example-2} is $1$. A lower bound on 
the moduli of eigenvalues obtained from Theorem \ref{Thm-lower bound-eigenvalues} 
is $0.38$.
	
\bmhead{Supplementary information}
The computations were carried out in MATLAB. The MATLAB code files along with the PDF output are available at a GitHub repository.

\section*{Declarations}

\noindent
\textbf{Code availability} The MATLAB code files along with the PDF output is available 
at \url{https://github.com/sachindranathj/MATLAB-Code-J.-Analysis.git}.

\bibliographystyle{sn-mathphys.bst}

\begin{thebibliography}{99}
		
\bibitem{Abu-Kittaneh}
Abu-Omar, A., Kittaneh, F.: Estimates for the numerical radius and the spectral radius 
of the Frobenius companion matrix and bounds for the zeros of polynomials. Ann. Funct. 
Anal. \textbf{5}(1), 56--62 (2014)
	
\bibitem{Alam-Behera}
Alam, R., Behera, N.: Linearizations for rational matrix functions and {R}osenbrock 
system polynomials. SIAM J. Matrix Anal. Appl. \textbf{37}(1), 354--380 (2016)
	
\bibitem{Amparan-Dopico}
Amparan, A., Dopico, F.M., Marcaida, S., Zaballa, I.: On minimal bases and indices of 
rational matrices and their linearizations. Linear Algebra Appl. \textbf{623}, 14--67 (2021)

\bibitem{Pallavi-Shrinath-Sachindranath}
Basavaraju, P., Hadimani, S., Jayaraman, S.: Bounds on the moduli of eigenvalues of
rational matrices. Results Math. \textbf{79}(5), Paper No. 206 (2024)
	
\bibitem{Betcke-Higham-Tisseur}
Betcke, T., Higham, N.J., Mehrmann, V., Schr\"{o}der, C., Tisseur, F.: N{LEVP}: a collection 
of nonlinear eigenvalue problems. ACM Trans. Math. Software \textbf{39}(2), Art. 7, 28 (2013)

\bibitem{Bindel-Hood}
Bindel, D., Hood, A.: Localization theorems for nonlinear eigenvalue problems. SIAM Rev. \textbf{57}(4), 
585--607(2015)  

\bibitem{Bini}		
Bini, D.A., Noferini, V., Sharify, M.: Locating the eigenvalues of matrix polynomials. SIAM J. 
Matrix Anal. Appl. \textbf{34}(4), 1708--1727 (2013)
	
\bibitem{Cameron}
Cameron, T.R.: Spectral bounds for matrix polynomials with unitary coefficients. Electron. J. 
Linear Algebra \textbf{30}, 585--591 (2015)
	
\bibitem{Chou-Huang}
Chou, S.H., Huand, T.M., Huand, W.Q., Lin, W.W.: Efficient {A}rnoldi-type algorithms for 
rational eigenvalue problems arising in fluid-solid systems. J. Comput. Phys. \textbf{230}(5), 
2189--2206 (2011)

\bibitem{Chu}
Chu, E.K.W.: Perturbation of eigenvalues for matrix polynomials via the 
{B}auer-{F}ike theorems. SIAM J. Matrix Anal. Appl. \textbf{25}(2), 551--573 
(2003)

\bibitem{Chu-Lin}
Chu, E.K.W., Lin, W.W.: Perturbation of eigenvalues for periodic matrix 
pairs via the {B}auer-{F}ike theorems. Linear Algebra Appl. \textbf{378}, 
183--202 (2004)

\bibitem{Dedieu-Tisseur}
Dedieu, J.P., Tisseur, F.: Perturbation theory for homogeneous polynomial 
eigenvalue problems. Linear Algebra Appl. \textbf{358}, 71--94 (2003)
	
\bibitem{Dopico-Marcaida}
Dopico, F.M., Marcaida, S., Quintana, M.C., Dooren, P.V.: Local linearizations of rational 
matrices with application to rational approximations of nonlinear eigenvalue problems. Linear 
Algebra Appl. \textbf{604}, 441--475 (2020)
	
\bibitem{Engstrom-Langer-Tretter}
Engstr\"{o}m, C., Langer, H., Tretter, C.: Rational eigenvalue problems and applications to 
photonic crystals. J. Math. Anal. Appl. \textbf{445}(1), 240--279 (2017)	
	
\bibitem{Higham-Tisseur}
Higham, N.J., Tisseur, F.: Bounds for eigenvalues of matrix polynomials. Linear Algebra Appl. 
\textbf{358}, 5--22 (2003)
	
\bibitem{Horn-Johnson}
Horn, R.A., Johnson, C.R.: Matrix Analysis, 2nd ed. Cambridge University Press, Cambridge 
(2013)
	
\bibitem{Le-Du-Nguyen}
Le, C.T., Du, T.H.B., Nguyen, T.D.: On the location of eigenvalues of matrix polynomials. Oper. 
Matrices \textbf{13}(4), 937--954 (2019)
	
\bibitem{Mehrmann-Heinrich}
Mehrmann, V., Heinrich, V.: Nonlinear eigenvalue problems: a challenge for modern eigenvalue 
methods. GAMM Mitt. Ges. Angew. Math. Mech. \textbf{27}(2), 121--152 (2004)

\bibitem{Monga-Shah}
Monga, Z.B., Shah, W.M.: Bound estimates of the eigenvalues of matrix polynomials. J. Anal. \textbf{31}, 
2973--2983 (2023)
	
\bibitem{Roy-Bora}
Roy, N., Bora, S.: Locating eigenvalues of quadratic matrix polynomials. Linear Algebra Appl. 
\textbf{649}, 452--490 (2022)
	
\bibitem{Saad-El-Guide-Miedlar}
Saad, Y., El-Guide, M., Miedlar, A.: A rational approximation method for the nonlinear 
eigenvalue problem. arXiv:1901.01188

\bibitem{Shah-Singh}
Shah, W.M., Singh, S.: On the bounds of eigenvalues of matrix polynomials. J. Anal. \textbf{31}, 
821--829 (2023)
	
\bibitem{Su-Bai}
Su, Y., Bai, Z.: Solving rational eigenvalue problems via linearization. SIAM J. Matrix Anal. 
Appl. \textbf{32}(1), 201--216 (2011)

\bibitem{Tretter}
Tretter, C.: The quadratic numerical range of an analytic operator function. Complex Anal. Oper. Theory 
\textbf{4}, 449--469 (2010)		
		
\end{thebibliography}

\end{document}